\newtheorem{theorem}{Theorem}
\newtheorem{lemma}[theorem]{Lemma}
\newtheorem{algorithm}[theorem]{Algorithm}
\theoremstyle{definition}
\newcommand{\cc}[1]{C(#1)}
\newcommand{\arbcone}{\mathscr{P}}
\newcommand{\hilb}[1]{\mathrm{Hilb}(#1)}
\newcommand{\intcone}[2][7n]{#2 \cap \Z^{#1}}
\newcommand{\knotinfo}{\emph{KnotInfo}}
\newcommand{\normaliz}{\emph{Normaliz}}
\newcommand{\R}{\mathbb{R}}
\newcommand{\regina}{\emph{Regina}}
\newcommand{\scone}{\mathscr{C}}
\newcommand{\tri}{\mathcal{T}}
\newcommand{\vrep}[1]{\mathbf{v}(#1)}
\newcommand{\Z}{\mathbb{Z}}
\title{Enumerating fundamental normal surfaces: \\
    Algorithms, experiments and invariants}
\author{Benjamin A.~Burton}
\date{30 September 2013}
\begin{document}

\maketitle

\begin{abstract}
    Computational knot theory and 3-manifold topology have seen significant
    breakthroughs in recent years, despite the fact that many key algorithms
    have complexity bounds that are exponential or greater.  In this setting,
    experimentation is essential for understanding the limits of
    practicality, as well as for gauging the relative merits of competing
    algorithms.

    In this paper we focus on normal surface theory, a key tool that appears
    throughout low-dimensional topology.  Stepping beyond the well-studied
    problem of computing vertex normal surfaces (essentially extreme rays of
    a polyhedral cone), we turn our attention to the more complex task of
    computing fundamental normal surfaces (essentially an integral basis for
    such a cone).  We develop, implement and experimentally compare a primal
    and a dual algorithm, both of which combine domain-specific techniques
    with classical Hilbert basis algorithms.  Our experiments indicate that we
    can solve extremely large problems that were once though intractable.
    As a practical application of our techniques, we fill gaps from the
    KnotInfo database by computing 398 previously-unknown crosscap numbers
    of knots.

    \medskip
    \noindent \textbf{Keywords}\quad
    Computational topology, knot theory,
    normal surfaces, Hilbert basis, algorithms, crosscap number
\end{abstract}

%
%

\section{Introduction}

Efficient computation in knot theory and 3-manifold topology remains a
significant challenge, with important implications for mathematical
research in these fields.

Theoretically, many algorithms in these fields
are highly complex and appear to be infeasibly slow.
For instance, the algorithm to test whether two 3-manifolds are
homeomorphic (topologically equivalent) has never been implemented, and
explicit bounds on its running time have never been computed;
see \cite{matveev03-algms} for just some of some of its highly intricate
components.

Practically, however, the right blend of topology, algorithms and heuristics can
yield surprising results.  Well known examples include the software packages
\emph{SnapPea} \cite{snappy,snappea}, \emph{Regina} \cite{regina},
and the \emph{3-Manifold Recogniser} \cite{recogniser},
which are extremely effective
for solving geometric, decomposition and recognition problems.
This is despite the fact that there are often no proofs to guarantee the
efficiency (or in some cases even the termination) of the underlying algorithms.

Obtaining practical algorithms such as these---even without theoretical
guarantees---is important for mathematicians.
A striking example was the resolution after 30 years of
Thurston's long-standing question on the Weber-Seifert
dodecahedral space \cite{burton12-ws}, in part due to
surprisingly effective heuristic improvements to an algorithm that
had been known since the 1980s \cite{jaco84-haken}, and whose
best theoretical complexity bound remains doubly-exponential
even today \cite{burton13-large}.

This is a typical pattern:
where theoretical bounds do exist in low-dimensional topology, they are
often extremely large---sometimes exponential, sometimes
tower-of-exponential---and they often bear little relation to practical
running times.
In this setting, experimentation plays two crucial roles:
\begin{itemize}
\item It is necessary for understanding the practical boundary between
what is and is not feasible;
\item It is essential when comparing competing algorithms, since
algorithms with smaller theoretical bounds do not always
performs better in practice.
\end{itemize}

In this paper we focus on algorithms that enumerate \emph{normal surfaces}.
Normal surfaces are ubiquitous in algorithmic 3-manifold topology and
knot theory, which gives such algorithms widespread
topological applications.  We show one such application at the end
of this paper, where we apply our techniques to compute
previously-unknown invariants of knots.

The main idea of normal surface theory is to
reduce difficult topological searches
to discrete problems on integer points in rational cones
\cite{hass99-knotnp,matveev03-algms}.  A typical topological algorithm
that uses normal surfaces might run as follows:
\begin{itemize}
    \item
    We present the input to our problem as a 3-manifold
    triangulation $\tri$ (so, if the input is a knot, we triangulate
    the \emph{knot complement} \cite{hass99-knotnp}), and we frame our
    topological problem as a search for some embedded surface in $\tri$ with a
    particular property (e.g., for the unknot recognition problem
    we search for an embedded disc with non-trivial boundary
    \cite{hass99-knotnp}).
    \item We prove that we can restrict this
    search to \emph{normal surfaces} in $\tri$, which are properly embedded
    surfaces that intersect the tetrahedra of $\tri$ in a well-behaved fashion.
    We encode normal surfaces arithmetically as
    integer points in the \emph{normal surface solution cone} $\scone$,
    which is a pointed rational cone in $\R^{7n}$ where
    $n$ is the number of tetrahedra in $\tri$.
    \item
    We prove that there is a \emph{finite} list
    $\mathscr{L}$ of points in this cone for which,
    if $\tri$ has a surface with the desired property,
    then some such surface is described by a point in our list
    $\mathscr{L}$.
    The algorithm now builds the cone $\scone$, constructs the
    finite list $\mathscr{L}$, rebuilds the corresponding surfaces,
    and tests each for the desired property.
\end{itemize}
For different topological algorithms, the list $\mathscr{L}$ takes on
different forms:
\begin{enumerate}[(i)]
    \item For some algorithms, $\mathscr{L}$ contains the smallest
    integer point on each extremal ray of the cone $\scone$.
    The corresponding surfaces are called \emph{vertex normal surfaces}.

    Topological algorithms of this type include
    unknot recognition \cite{haken61-knot,hass99-knotnp},
    3-sphere recognition \cite{jaco03-0-efficiency,rubinstein95-3sphere},
    connected sum decomposition \cite{jaco03-0-efficiency}, and
    Hakenness testing \cite{jaco84-haken,jaco95-algorithms-decomposition}.

    \item For some algorithms, $\mathscr{L}$ is the Hilbert basis for
    the cone $\scone$; that is, all integer points in $\scone$ that cannot be
    expressed as a non-trivial sum of other integer points in $\scone$.
    The corresponding surfaces are called \emph{fundamental normal surfaces}.

    Algorithms of this type include JSJ decomposition \cite{matveev03-algms},
    computing knot genus in arbitrary 3-manifolds
    \cite{agol02-knotgenus,schubert61-normal},
    computing the crosscap number of a knot \cite{burton12-crosscap}, and
    determining which Dehn fillings of a knot complement are Haken
    \cite{jaco03-decision}.
    %

    \item For some algorithms, the list $\mathscr{L}$ is much larger
    again: typically one must first enumerate all fundamental normal
    surfaces, and then (often at great expense) extend this list
    in some way.
\end{enumerate}

We emphasise that not all integer points in $\scone$ correspond to
normal surfaces: there are additional combinatorial constraints called
the \emph{quadrilateral constraints}
that such points must satisfy.  As a result, normal surfaces only
represent a very small subset of the integer points in $\scone$.
This is a powerful observation that underpins much of this paper.

Enumerating vertex surfaces---case~(i) above---is a ``best case scenario'',
and is now a well-studied problem with highly effective solutions
\cite{burton10-dd,burton13-tree}.  The running time remains exponential,
but this is unavoidable since the output size is known to be
exponential for some families of inputs
\cite{burton10-complexity,burton13-bounds}.

Enumerating fundamental surfaces---case~(ii)---is now the next major step
to be made in ``practical'' normal surface theory, and this is what we
address in this paper.
We emphasise that, because of the high dimensionality of the cone
and the potential for super-exponentially many solutions
\cite{hass99-knotnp}, ``out-of-the-box'' Hilbert basis algorithms
such as those found in the excellent software package
\normaliz\ \cite{bruns10-normaliz,bruns01-closure}
are impractically slow for all but the simplest inputs.
Instead we must marry classical Hilbert basis algorithms
with the structure and constraints of normal surface theory.
The result of this marriage, as described in this paper,
is the first practical software
for enumerating fundamental normal surfaces.

It is widely noted that no one algorithm for Hilbert basis
enumeration is superior in all settings
\cite{bruns10-normaliz,contejean94-incremental,pasechnik01-elliot}.
For this reason, we develop and experimentally compare two algorithms,
which at their core are based upon two modern Hilbert basis algorithms:
\begin{itemize}
    \item Our \emph{primal algorithm} draws upon the
    primal Hilbert basis algorithm described by
    Bruns, Ichim and Koch \cite{bruns10-normaliz,bruns01-closure}.
    In brief, we (i)~enumerate all vertex normal surfaces;
    (ii)~use these to build a piecewise-convex representation of the
    non-convex portion of $\scone$ that satisfies the quadrilateral
    constraints; and then
    (iii)~run the Bruns-Ichim-Koch algorithm over each convex
    piece and combine the results.

    \item Our \emph{dual algorithm} is based on the
    dual Hilbert basis algorithm of Bruns and Ichim
    \cite{bruns10-normaliz}, based on earlier work by Pottier
    \cite{pottier96-euclidean}.
    This is an inductive algorithm that generalises the double
    description method for enumerating extreme rays of a polyhedron
    \cite{burton10-dd,fukuda96-doubledesc,motzkin53-dd}.
    Our algorithm closely follows the general Bruns-Ichim method,
    with an additional ``filtering'' step that
    enforces the quadrilateral constraints at each intermediate stage.
\end{itemize}

Through detailed experimentation over a census of 10,986 triangulations
whose cones have up to 245 dimensions, we find that the primal algorithm
is both faster and more consistent in performance.  As well as comparing
the primal and dual algorithms, we use our experiments to study the
bottlenecks of the primal algorithm, and thereby identify avenues for
future improvement.

As an illustration of its applicability, we use the primal algorithm to
compute 398 previously-unknown crosscap numbers of knots.
The crosscap number is a knot invariant that is related to, but offers
distinct information from, the knot genus
\cite{clark78-crosscaps,murakami95-crosscap}.
There is still no general algorithm known for computing crosscap
numbers, and our results go a significant way towards filling in the
2640 missing crosscap numbers from the online \knotinfo\ database of
knot invariants.


All implementations use the freely-available software
package {\regina} \cite{burton04-regina,regina},
and the primal algorithm also incorporates portions of {\normaliz}~2.10.1.
The primal and dual algorithms are now built directly into {\regina},
and the additional code for computing crosscap numbers can be downloaded
from \url{http://www.maths.uq.edu.au/~bab/code/}.


%
%

\section{Preliminaries} \label{s-prelim}

Here we give a very brief overview of triangulations and normal surfaces.
In keeping with the focus of this paper, we concentrate on the algebraic
formulation at the expense of geometric insight; for further information
the reader is referred to the excellent summary in \cite{hass99-knotnp}.

Throughout this paper, the input for a topological problem is a
\emph{3-manifold triangulation} $\tri$, built from $n$ tetrahedra by
affinely identifying
(or ``gluing together'') some or all of the $4n$ tetrahedron faces in pairs
so that the resulting topological space is a 3-manifold
(possibly with boundary).

Such triangulations are more general than simplicial complexes:
as a result of the face gluings, different edges of the same tetrahedron
might be identified together, and likewise with vertices.  Two faces of
the same tetrahedron may even be glued together.  This general definition
allows us to express rich topological
structures using few tetrahedra, which is important for computation.

A \emph{normal surface} in $\tri$ is a properly
embedded surface that meets each tetrahedron in a (possibly
empty) disjoint union of \emph{normal discs}, each of which is a curvilinear
triangle or quadrilateral.
Each triangle separates one vertex of the tetrahedron from the others,
and each quadrilateral separates two vertices from the others,
as illustrated in Figure~\ref{fig-normaldiscs}.
Normal surfaces may be disconnected or empty.

\begin{figure}[htb]
    \centering
    \subfigure[Examples of normal discs]{%
        \label{fig-normaldiscs}
        \hspace{1.5cm}\includegraphics[scale=0.4]{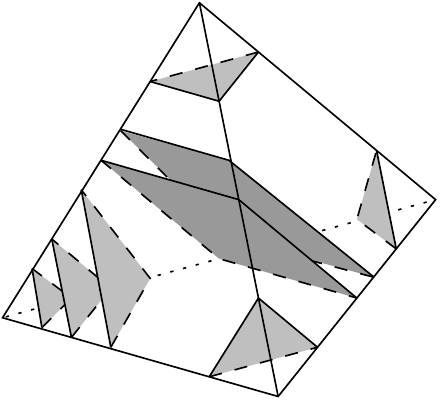}\hspace{1.5cm}}
    \hspace{1cm}
    \subfigure[The seven disc types in a tetrahedron]{%
        \label{fig-normaltypes} \includegraphics[scale=0.4]{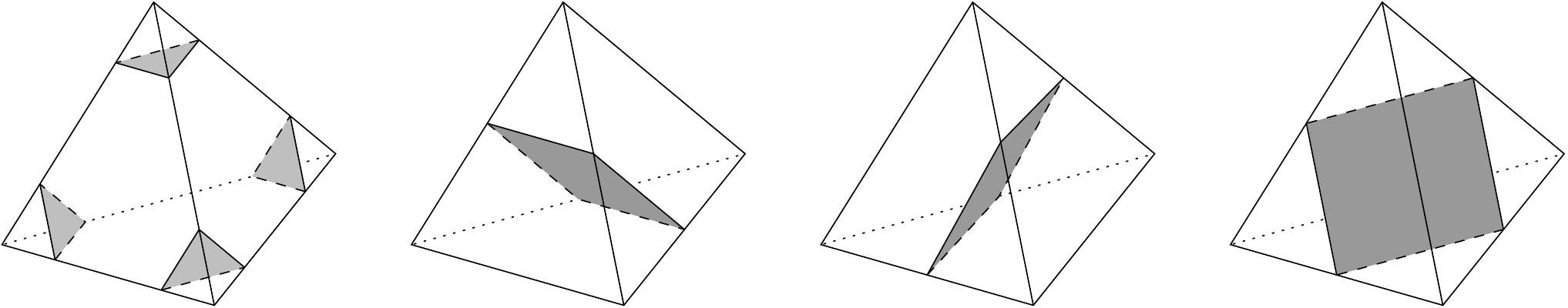}}
    \caption{Normal triangles and quadrilaterals}
    \label{fig-normal}
\end{figure}

There are seven \emph{types of disc} in each tetrahedron, defined by
which edges of the tetrahedron a normal disc meets.  These include
four triangle types and three quadrilateral types, all
illustrated in Figure~\ref{fig-normaltypes}.
The \emph{vector representation} of a normal surface $S$ is a vector
$\vrep{S} \in \Z^{7n}$, where the $7n$ elements of $\vrep{S}$ count the
number of discs of each type in each tetrahedron.
Given $\vrep{S}$, it is simple to reconstruct the surface $S$
(up to an isotopy that preserves the simplices of $\tri$).

For each normal surface $S$, $\vrep{S}$ satisfies the \emph{matching
equations}.  These are $3f$ linear homogeneous equations derived from $\tri$,
where $f$ is the number of non-boundary faces of $\tri$.  We express
these equations as $A \cdot \vrep{S} = 0$, where $A$ is a $3f \times 7n$
\emph{matching matrix}.  In essence, these equations ensure that
triangles and quadrilaterals in adjacent tetrahedra can be
joined together.
The matching matrix $A$ is sparse with small integer entries.
The \emph{normal surface solution cone} $\scone$ is the rational cone
$\scone = \{ \mathbf{x} \in \R^{7n}\,|\,
 A\mathbf{x}=0,\ \mathbf{x} \geq 0\}$,
which is a pointed cone with vertex at the origin.

No normal surface $S$ can have two different types of quadrilateral in the
same tetrahedron, since these would necessarily intersect (contradicting
the requirement that $S$ be properly embedded).
We say that a vector $\mathbf{x} \in \R^{7n}$ satisfies the
\emph{quadrilateral constraints} if, for each tetrahedron $\Delta$ of $\tri$,
at most one of the three coordinates of $\mathbf{x}$ that counts
quadrilaterals in $\Delta$ is non-zero.  Running through all $n$
tetrahedra, this gives us $n$ distinct
constraints of the form ``at most one of $x_i,x_j,x_k$ is non-zero''.
The quadrilateral constraints are
non-linear, and their solution set is non-convex.

A point $\mathbf{x} \in \R^{7n}$ is called \emph{admissible} if
it lies in the cone $\scone$ and satisfies the quadrilateral constraints.
By a theorem of Haken \cite{haken61-knot},
an integer vector $\mathbf{x} \in \Z^{7n}$ represents
a normal surface if and only if $\mathbf{x}$ is admissible.
The \emph{admissible region} of $\scone$ is the set of all admissible
points in $\scone$.

Let $\arbcone \subset \R^d$ be any pointed rational cone with
vertex at the origin,
and let $\intcone[d]{\arbcone}$ denote all integer points in $\arbcone$.
Then the \emph{Hilbert basis} of $\intcone[d]{\arbcone}$,
denoted $\hilb{\intcone[d]{\arbcone}}$ or just
$\hilb{\arbcone}$ for convenience, is the minimal set of integer points
that generates all of $\intcone[d]{\arbcone}$ under addition.
Equivalently, $\hilb{\arbcone}$ is the set of all
$\mathbf{x} \in \intcone[d]{\arbcone}$ for which,
if $\mathbf{x}=\mathbf{y}+\mathbf{z}$ with
$\mathbf{y},\mathbf{z}\in\intcone[d]{\arbcone}$,
then either $\mathbf{y}=0$ or $\mathbf{z}=0$.
Hilbert bases are finite and unique
\cite{hilbert88-basis,vandercorput31-hilbert},
and have many applications \cite{chubarov05-basis}.
They can be defined more generally, but for this paper the definition
above will suffice.

A \emph{fundamental normal surface} is a normal surface $S$ for which,
if $\vrep{S}=\vrep{T}+\vrep{U}$ for normal surfaces $T$ and $U$, then
either $\vrep{T}=0$ or $\vrep{U}=0$.
The fundamental normal surfaces are represented by the
admissible points in $\hilb{\scone}$, i.e.,
those points of $\hilb{\scone}$ that satisfy the quadrilateral constraints.

A \emph{vertex normal surface} is a normal surface $S$
for which $\vrep{S}$ lies on an extremal ray of $\scone$ and the
elements of $\vrep{S}$ have no common factor.\footnote{%
    There are several definitions of \emph{vertex normal surface}
    in the literature; others
    allow common factors \cite{burton09-convert,jaco03-0-efficiency},
    or use the double cover
    if $S$ is one-sided in $\tri$ \cite{burton12-ws,tollefson98-quadspace}.
    Our definition here follows that of Jaco and Oertel \cite{jaco84-haken}.}
It is clear that every vertex normal surface is also a fundamental
normal surface, but in general the converse is not true.

%
%

\section{The primal algorithm} \label{s-primal}

We begin this section by outlining the primal algorithm
described by Bruns, Ichim and Koch \cite{bruns10-normaliz,bruns01-closure}
for enumerating Hilbert bases in general.
Following this, we embed this into a larger
algorithm tailored for enumerating fundamental surfaces that exploits the
structure provided by normal surface theory.

The Bruns-Ichim-Koch algorithm comes in several forms
\cite{bruns10-normaliz,bruns01-closure}; here we describe the variant
most relevant to us.
Let $\arbcone \subset \R^d$
be a pointed rational cone with vertex at the origin.
The Bruns-Ichim-Koch algorithm takes as input the \emph{extremal rays}
of $\arbcone$, and computes the Hilbert basis $\hilb{\arbcone}$.
The following is an overview of the procedure;
full details can be found in \cite{bruns10-normaliz}.
\begin{enumerate}
    \item Use a variant of Fourier-Motzkin elimination to compute
    the supporting hyperplanes for $\arbcone$, and
    triangulate $\arbcone$ into simplicial subcones
    (cones whose extremal rays are linearly independent).

    \item For each simplicial subcone $\mathscr{S}$, build the
    semi-open parallelotope spanned by the smallest integer vector
    on each extremal ray of $\mathscr{S}$, as shown in
    Figure~\ref{fig-parallelotope}.  Specifically, if the extremal rays
    are defined by integer vectors
    $\mathbf{v}_1,\ldots,\mathbf{v}_k$, we build the parallelotope
    $\{\sum \lambda_i \mathbf{v}_i\,|\,0 \leq \lambda_i < 1\}$.

    \begin{figure}[htb]
        \centering
        \subfigure[Building a semi-open parallelotope]{
            \label{fig-parallelotope}
            \quad\includegraphics[scale=0.6]{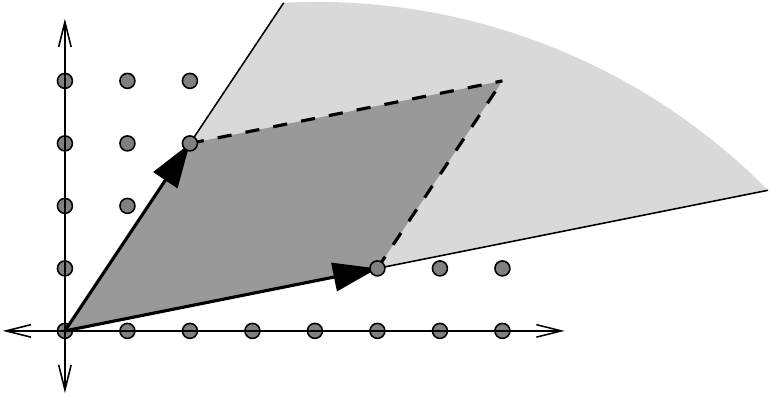}\quad}
        \hspace{1.5cm}
        \subfigure[All integer points in the parallelotope]{
            \label{fig-intpoints}
            \quad\qquad\includegraphics[scale=0.6]{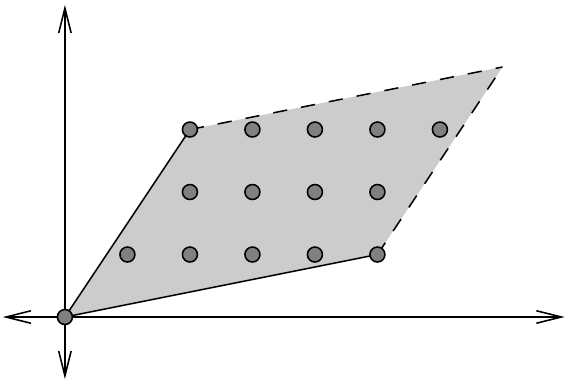}\quad\qquad}
        \caption{Building parallelotopes in the primal algorithm}
        \label{fig-primal}
    \end{figure}

    \item Identify all integer points within each semi-open parallelotope,
    as illustrated in Figure~\ref{fig-intpoints}, and combine these into a
    single list, which generates all of $\intcone[d]{\arbcone}$.
    Reduce both the intermediate and
    final lists by removing redundant generators
    (i.e., vectors that can be expressed as non-trivial sums of others in
    the lists).
    The final reduced list is the Hilbert basis $\hilb{\arbcone}$.
\end{enumerate}

In our setting,
we could run the Bruns-Ichim-Koch algorithm directly over
the normal surface solution cone $\scone \subset \R^{7n}$,
but this is typically infeasible given the high dimensionality of
$\scone$.  Moreover, it is wasteful:
our aim is to enumerate fundamental normal surfaces, and so we do not
need all of $\hilb{\scone}$, but just its \emph{admissible} points.

Our algorithm, instead of triangulating all of $\scone$,
triangulates only the (non-convex) admissible region of $\scone$.
We call a face $F \subseteq \scone$
\emph{admissible} if every point $\mathbf{x} \in F$ is admissible.
In particular, note that the vertex normal surfaces correspond
precisely to the admissible extremal rays of $\scone$.
A \emph{maximal admissible face} of $\scone$ is an admissible face that
is not a strict subface of another admissible face.

One can show that the admissible region of $\scone$ is
precisely the union of all maximal admissible faces
\cite{burton11-asymptotic,jaco84-haken}; in essence this decomposes the
\emph{non-convex} admissible region of $\scone$ into a union of
\emph{convex} subcones.  Note that
maximal admissible faces may have different dimensions, and if we
truncate the origin (which is common to all faces)
then the admissible region may even be disconnected.

Our strategy is to enumerate all maximal admissible faces of
$\scone$, and then use the Bruns-Ichim-Koch algorithm to triangulate
and compute the Hilbert basis for each.  This has the advantage that
maximal admissible faces are often significantly simpler than the
full cone $\scone$: they have lower dimension
\cite{burton11-asymptotic}, and are often generated by
very few admissible extremal rays \cite{burton10-complexity}.
This can significantly reduce the burden on the general Hilbert basis
enumeration procedure.
The overall algorithm is as follows:

\begin{algorithm}[Primal algorithm for fundamental normal surfaces]
        \label{a-primal}
    To enumerate all fundamental normal surfaces in a given triangulation:
    \begin{enumerate}
        \item Enumerate all vertex normal surfaces---that is,
        all admissible extremal rays of $\scone$---using the algorithms in
        \cite{burton09-convert} and \cite{burton13-tree}, which are
        optimised specifically for normal surface theory.
        Let $\mathcal{V}$ be the resulting set of admissible extremal rays.

        \item Using $\mathcal{V}$ as input,
        enumerate all maximal admissible faces
        of $\scone$ (see Algorithm~\ref{a-maxadm} below).

        \item For each maximal admissible face $F \subseteq \scone$,
        identify which rays in $\mathcal{V}$ are subfaces of $F$,
        and pass these rays as input to the Bruns-Ichim-Koch algorithm
        which will then enumerate $\hilb{F}$.
        The union $\bigcup \hilb{F}$ over all
        maximal admissible faces $F$ is
        is precisely the set of all vector representations of
        fundamental normal surfaces.
    \end{enumerate}
\end{algorithm}

\noindent
The correctness of the algorithm follows from the following observation:

\begin{lemma} \label{l-fundface}
    A vector $\mathbf{x} \in \R^{7n}$ represents a fundamental normal
    surface if and only if there is some maximal admissible face
    $F \subseteq \scone$ for which $\mathbf{x} \in \hilb{F}$.
\end{lemma}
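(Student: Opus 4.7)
The plan is a straightforward two-direction argument, with the key technical inputs being (a) the decomposition of the admissible region into maximal admissible faces (already stated in the excerpt), (b) the fact that admissibility is preserved under summation/decomposition with nonnegative summands, and (c) the defining property of a face of a cone that if $\mathbf{y} + \mathbf{z} \in F$ with $\mathbf{y},\mathbf{z} \in \scone$, then $\mathbf{y},\mathbf{z} \in F$.

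First I would handle the ``only if'' direction. Let $\mathbf{x}$ represent a fundamental normal surface, so $\mathbf{x}$ is admissible. Using the stated fact that the admissible region equals the union of maximal admissible faces, pick any maximal admissible face $F$ containing $\mathbf{x}$; then $\mathbf{x} \in F \cap \Z^{7n}$. To show $\mathbf{x} \in \hilb{F}$, take any decomposition $\mathbf{x} = \mathbf{y} + \mathbf{z}$ with $\mathbf{y},\mathbf{z} \in F \cap \Z^{7n}$. Since every point of $F$ is admissible, $\mathbf{y}$ and $\mathbf{z}$ are themselves admissible integer points and therefore represent normal surfaces $T$ and $U$ with $\vrep{T} = \mathbf{y}$, $\vrep{U} = \mathbf{z}$. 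Because $\mathbf{x}$ is fundamental, one of $\mathbf{y},\mathbf{z}$ must be zero, so $\mathbf{x} \in \hilb{F}$.

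For the ``if'' direction, suppose $\mathbf{x} \in \hilb{F}$ for some maximal admissible face $F$. Then $\mathbf{x}$ is an integer point of $F$, hence admissible, and so represents a normal surface $S$. Suppose $\vrep{S} = \vrep{T} + \vrep{U}$ for some normal surfaces $T,U$. Then $\vrep{T},\vrep{U}$ are admissible integer points in $\scone$, and since $F$ is a face of $\scone$ and $\vrep{T} + \vrep{U} = \mathbf{x} \in F$, the face property forces $\vrep{T},\vrep{U} \in F \cap \Z^{7n}$. Since $\mathbf{x} \in \hilb{F}$, this decomposition is trivial, so $S$ is fundamental.

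The only step requiring a small verification is the preservation of the quadrilateral constraints under decomposition: if $x_i, x_j, x_k \geq 0$ with at most one nonzero, and $(x_i,x_j,x_k) = (y_i,y_j,y_k) + (z_i,z_j,z_k)$ with all entries nonnegative, then the summands inherit the ``at most one nonzero'' property coordinate-triple by coordinate-triple. This is a one-line observation and is the main (small) obstacle; everything else is formal. No new machinery beyond what the excerpt already provides is needed.
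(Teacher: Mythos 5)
Your proof is correct and follows essentially the same two-direction argument as the paper: picking a maximal admissible face containing $\mathbf{x}$ and using admissibility of $F$ in one direction, and using the face property of $\scone$ to place the summands $\vrep{T},\vrep{U}$ inside $F$ in the other. The closing remark about quadrilateral-constraint preservation under nonnegative decomposition is true but actually redundant given your argument's structure, since admissibility of the summands is already supplied either by $F$ being an admissible face or by Haken's theorem.
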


\begin{proof}
    Let $\mathbf{x} = \vrep{S}$ where $S$ is a fundamental normal
    surface.  Since $\mathbf{x}$ is an admissible point of $\scone$,
    it must belong to some maximal admissible face $F \subseteq \scone$.
    If $\mathbf{x} \notin \hilb{F}$ then there
    are non-zero integer vectors $\mathbf{y},\mathbf{z} \in F$ for which
    $\mathbf{x}=\mathbf{y}+\mathbf{z}$.  Since $F$ is an admissible face
    of $\scone$ it follows that $\mathbf{y}=\vrep{T}$ and
    $\mathbf{z}=\vrep{U}$ for some
    normal surfaces $T$ and $U$, contradicting the fundamentality of $S$.

    Conversely, suppose that $\mathbf{x} \in \hilb{F}$ for some maximal
    admissible face $F \subseteq \scone$.  Then $\mathbf{x}$ is an
    admissible integer vector of $\scone$, and so $\mathbf{x}=\vrep{S}$
    for some normal surface $S$.  If $S$ is not fundamental then
    $\vrep{S}=\vrep{T}+\vrep{U}$ for normal surfaces $T$ and $U$ with
    $\vrep{T},\vrep{U} \neq 0$.
    Since they represent normal surfaces, both
    $\vrep{T},\vrep{U} \in \scone$, and so any face of the cone
    $\scone$ that contains $\vrep{S}$ must contain both summands
    $\vrep{T}$ and $\vrep{U}$ as well.
    In particular we have $\vrep{T},\vrep{U} \in F$, contradicting the
    assumption that $\mathbf{x} = \vrep{T}+\vrep{U} \in \hilb{F}$.
\end{proof}

\noindent
In the remainder of this section we expand upon
steps~2 and 3 of Algorithm~\ref{a-primal}.

We represent faces $F \subseteq \scone$
using \emph{zero sets} \cite{fukuda96-doubledesc}.
For each face $F \subseteq \scone$, the zero set of $F$ is a subset
of $\{1,\ldots,7n\}$ indicating
which coordinate positions are zero throughout $F$.
We denote this zero set by $Z(F)$, and define it formally as
$Z(F) = \{k\,|\,x_k = 0\ \mbox{for all}\ \mathbf{x} \in F\}$.

Because $\scone$ is of the form $\{ \mathbf{x} \in \R^{7n}\,|\,
 A\mathbf{x}=0,\ \mathbf{x} \geq 0\}$,
zero sets effectively encode the support hyperplanes for each face:
it is then clear that for any two faces $F,G \subseteq \scone$ we have
$Z(F) = Z(G)$ if and only if $F = G$, and
$Z(F) \subseteq Z(G)$ if and only if $G \subseteq F$.
Zero sets can be stored and manipulated efficiently in code using
bitmasks of length $7n$.

The enumeration of maximal admissible faces
makes use of admissible zero sets, which
correspond to admissible faces of $\scone$.  We call a set
$z \subseteq \{1,\ldots,7n\}$ \emph{admissible} if the corresponding
zero/non-zero coordinate pattern satisfies the quadrilateral constraints
in $\R^{7n}$;
that is, for each tetrahedron $\Delta$ of our triangulation, at most one
of the three coordinate positions that counts quadrilaterals in
$\Delta$ does not appear in $z$.

\begin{algorithm}[Maximal admissible face decomposition] \label{a-maxadm}
    Given the set $\mathcal{V}$ of all admissible extremal rays of $\scone$,
    the following algorithm enumerates all maximal admissible faces of $\scone$.
    \begin{enumerate}
        \item Build zero sets for all rays $R \in \mathcal{V}$ and
        store these in the set $\mathcal{S}_1$.
        Initialise an empty output set $\mathcal{M}$, which
        will eventually contain the zero sets for all
        maximal admissible faces of $\scone$.
        \item Inductively build sets $\mathcal{S}_2,\mathcal{S}_3,\ldots$
        as follows.  To build the set $\mathcal{S}_k$:
        \begin{enumerate}
            \item For each zero set $z \in \mathcal{S}_{k-1}$, find
            all $v \in \mathcal{S}_1$ for which
            $z \nsubseteq v$ and $z \cap v$ is admissible.
            For each such $v$, insert $z \cap v$ into $\mathcal{S}_k$.
            If no such $v$ exists, insert $z$ into the output set
            $\mathcal{M}$.
            \item Reduce $\mathcal{S}_k$ to its
            maximal elements by set inclusion.  That is, remove all
            $z \in \mathcal{S}_k$ for which there exists some
            strict superset $z' \in \mathcal{S}_k$.
            \item If $\mathcal{S}_k$ is empty, terminate the
            algorithm and return the output set $\mathcal{M}$.
        \end{enumerate}
    \end{enumerate}
\end{algorithm}

The key observation is that each set $\mathcal{S}_i$,
once constructed, contains precisely the zero sets for all
admissible faces of dimension $i$.  A full proof of
correctness and termination will be given shortly.

First, however, we make a simple observation regarding zero sets.
Recall from standard polytope theory that,
for any two faces $F$ and $G$ of a polyhedron $\mathscr{P}$,
the \emph{join} $F \vee G$ is the unique smallest-dimensional face
of $\mathscr{P}$ that contains both $F$ and $G$, and that for any face
$H \subseteq \mathscr{P}$ with $F,G \subseteq H$ we have
$F \vee G \subseteq H$ \cite{ziegler95}.

\begin{lemma} \label{l-join}
    For any two faces $F$ and $G$ of the normal surface solution cone $\scone$,
    we have $Z(F \vee G) = Z(F) \cap Z(G)$.
\end{lemma}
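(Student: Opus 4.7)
The plan is to prove both inclusions separately, leaning heavily on the inclusion-reversing correspondence between faces of $\scone$ and their zero sets that the excerpt has already established (namely, $Z(F) \subseteq Z(G)$ iff $G \subseteq F$). Since joins in a lattice correspond to meets under any order-reversing isomorphism, the identity $Z(F \vee G) = Z(F) \cap Z(G)$ should essentially be forced by this correspondence together with the defining universal property of $F \vee G$.

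For the easy inclusion $Z(F \vee G) \subseteq Z(F) \cap Z(G)$: since both $F \subseteq F \vee G$ and $G \subseteq F \vee G$, the contravariance of $Z$ gives $Z(F \vee G) \subseteq Z(F)$ and $Z(F \vee G) \subseteq Z(G)$, hence the containment in the intersection. This step is essentially formal.

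For the opposite inclusion $Z(F) \cap Z(G) \subseteq Z(F \vee G)$, I would introduce an auxiliary face
\[
H \;=\; \{\mathbf{x} \in \scone \,:\, x_k = 0 \text{ for all } k \in Z(F) \cap Z(G)\}.
\]
Because $\scone = \{\mathbf{x} \geq 0 : A\mathbf{x}=0\}$, the set $H$ is obtained from $\scone$ by setting a collection of nonnegative coordinates to zero, so $H$ is a face of $\scone$ (equivalently, an intersection of the coordinate-hyperplane faces $F_k = \scone \cap \{x_k = 0\}$). Next I would verify that $F \subseteq H$ and $G \subseteq H$: if $k \in Z(F) \cap Z(G)$ then $x_k = 0$ throughout $F$ and throughout $G$ by definition, so the coordinate conditions defining $H$ are automatically satisfied on both. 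The universal property of the join then forces $F \vee G \subseteq H$, and since every point of $H$ has $x_k = 0$ for $k \in Z(F) \cap Z(G)$, the same holds on $F \vee G$, yielding $Z(F) \cap Z(G) \subseteq Z(F \vee G)$.

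The only slightly subtle point — the main obstacle, such as it is — is the assertion that the auxiliary set $H$ is really a face of $\scone$ rather than merely a convex subcone; but this follows immediately from the observation that $H$ is a finite intersection of the faces $F_k$, and intersections of faces are faces. Everything else is bookkeeping with the order-reversing correspondence.
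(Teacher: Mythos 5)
Your proof is correct and takes essentially the same route as the paper: both directions are argued via the order-reversing correspondence, and both introduce the same auxiliary face $H$ cut out by the hyperplanes $\{x_k = 0\}$ for $k \in Z(F)\cap Z(G)$ together with the universal property of the join. The only difference is that you explicitly justify that $H$ is a face (as an intersection of coordinate faces), a point the paper takes for granted.
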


\begin{proof}
    Since $F \vee G \supseteq F,G$, any coordinate
    position that takes a non-zero value in either $F$ or $G$ also takes
    a non-zero value somewhere in $F \vee G$.  Therefore
    $Z(F \vee G) \subseteq Z(F) \cap Z(G)$.

    Let $H$ be the face of $\scone$ obtained by intersecting $\scone$
    with the supporting hyperplanes
    $\{x_i = 0\}$ for all $i \in Z(F) \cap Z(G)$.
    It is clear that $Z(F) \cap Z(G) \subseteq Z(H)$.
    Moreover, $F,G \subseteq H$ since every point in $F$ or $G$
    lies on all of the supporting hyperplanes listed above.
    Therefore $F \vee G \subseteq H$, and so
    $Z(F) \cap Z(G) \subseteq Z(H) \subseteq Z(F \vee G)$.

    We now have
    $Z(F \vee G) \subseteq Z(F) \cap Z(G) \subseteq Z(F \vee G)$,
    and so
    $Z(F \vee G) = Z(F) \cap Z(G)$.
\end{proof}

\noindent
We can now move on to the full proof of correctness and termination for
Algorithm~\ref{a-maxadm}.

\begin{proof}[Proof of correctness and termination]
    We first prove by induction that, once constructed, each set
    $\mathcal{S}_i$ contains precisely the zero sets for all admissible
    faces of $\scone$ of dimension $i$.

    This is clearly true for $i=1$, since the 1-dimensional faces of
    $\scone$ are precisely the extremal rays, and we use all
    admissible extremal rays of $\scone$
    to fill $\mathcal{S}_1$ in step~1 of the algorithm.

    Consider now some $i > 1$.  We assume our inductive hypothesis
    for both $\mathcal{S}_{i-1}$ and $\mathcal{S}_1$, and prove it for
    $\mathcal{S}_i$ in three stages:

    \begin{enumerate}[(i)]
        \item \emph{Every $y \in \mathcal{S}_i$ is the zero set
        $Z(F)$ for some admissible face $F \subseteq \scone$
        of dimension $\geq i$.}

        Suppose that $y$ is constructed in step~2(a) of the algorithm as
        $y = z \cap v$, where $z \in \mathcal{S}_{i-1}$
        and $v \in \mathcal{S}_1$.  By the inductive hypothesis
        we have $z=Z(G)$ and $v=z(R)$ for some admissible faces
        $G,R \subseteq \scone$ with
        $\dim(G) = i-1$ and $\dim(R)=1$.

        By Lemma~\ref{l-join} we have $y = Z(G \vee R)$.
        Because step~2(a) of the
        algorithm only constructs admissible zero sets $y = z \cap v$,
        it follows that $G \vee R$ must be an admissible face.
        Because step~2(a) only considers pairs for which $z \nsubseteq v$,
        it follows that $R \nsubseteq G$ and therefore
        $G \vee R$ has strictly higher
        dimension than $G$; that is, $\dim(G \vee R) \geq i$.

        \item \emph{For every admissible $i$-dimensional face
        $F \subseteq \scone$, we have $Z(F) \in \mathcal{S}_i$.}

        Let $G \subseteq F$ be any $(i-1)$-dimensional subface of $F$,
        and let $R \subseteq F$ be any extremal ray of $F$ not contained in
        $G$.
        Since $F \supseteq G,R$ we have $F \supseteq G \vee R$,
        and since $i-1 = \dim(G) < \dim(G \vee R) \leq \dim(F) = i$
        it follows that $\dim(F) = \dim(G \vee R)$, and hence $F = G \vee R$.
        By Lemma~\ref{l-join} we then have $Z(F) = Z(G) \cap Z(R)$.

        Since $F$ is admissible, so are $G$ and $R$, and it follows
        from the inductive hypothesis that $Z(G) \in \mathcal{S}_{i-1}$
        and $Z(R) \in \mathcal{S}_1$.
        Because $R \nsubseteq G$ we have $Z(G) \nsubseteq Z(R)$,
        and we already know from $F$ that $Z(F) = Z(G) \cap Z(R)$ is admissible.
        Therefore the set
        $Z(F) = Z(G) \cap Z(R)$ is inserted into $\mathcal{S}_i$
        in step~2(a) of the algorithm.

        Furthermore, $Z(F)$ is not removed from $\mathcal{S}_i$
        in step~2(b) of the algorithm.
        This is because, by stage~(i) above, any strict superset
        $z' \supset Z(F)$ that appears in $\mathcal{S}_i$ must correspond to
        a strict subface $F' \subset F$ of dimension $\geq i$.  This is
        impossible, since $F$ itself has dimension $i$.

        \item \emph{For every admissible $j$-dimensional face
        $F \subseteq \scone$ with $j > i$,
        we have $Z(F) \notin \mathcal{S}_i$.}

        For any such $F$, let $G \subseteq F$ be any $i$-dimensional
        subface of $F$.  Because $F$ is admissible, $G$ is also,
        and by stage~(ii) above it follows that
        $Z(G) \in \mathcal{S}_i$.  Since $G$ is a strict subface of $F$,
        $Z(G)$ is a strict superset of $Z(F)$, and so even if $Z(F)$ is
        constructed in step~2(a) of the algorithm, it will be
        subsequently removed in step~2(b).
    \end{enumerate}

    \noindent
    Together these three stages establish our inductive claim.

    From here it is simple to see that Algorithm~\ref{a-maxadm} terminates:
    since the cone $\scone$ has finitely many faces, there are only
    finitely many non-empty sets $\mathcal{S}_k$.
    Note also that Algorithm~\ref{a-maxadm} does not terminate until all
    non-empty sets $\mathcal{S}_k$ have been constructed, since
    if $\mathcal{S}_k$ is empty then there are no admissible $k$-faces,
    which means there can be no admissible $i$-faces for any $i \geq k$.

    To finish, we show that the output of Algorithm~\ref{a-maxadm} is correct.
    Let $F \subseteq \scone$ be any maximal admissible face of dimension
    $i$; by our induction above we have $Z(F) \subseteq \mathcal{S}_i$.
    Suppose there is some $v \in \mathcal{S}_1$
    for which $Z(F) \nsubseteq v$ and for which $Z(F) \cap v$ is admissible.
    Following the argument in stage~(i) above,
    $Z(F) \cap v$ must be the zero set for some admissible face of dimension
    $\geq i+1$.  Moreover, this must be a strict superface of $F$,
    which contradicts the maximality of $F$.
    Therefore there can be no such $v \in \mathcal{S}_1$, and so we include
    $Z(F)$ in the output set $\mathcal{M}$ as required.

    Conversely, consider any output $z \in \mathcal{M}$.  We must have
    $z \in \mathcal{S}_i$ for some $i$, and so $z = Z(G)$ for some
    admissible $i$-dimensional face $G \subseteq \scone$.
    If $G$ is not maximal, there must be some admissible
    $(i+1)$-dimensional superface $F \supset G$,
    and some admissible extremal ray $R \subseteq F$ not contained in $G$.
    Following the argument in stage~(ii) above,
    for the pair $Z(G) \in \mathcal{S}_i$ and $Z(R) \in \mathcal{S}_1$
    we have $Z(G) \nsubseteq Z(R)$ and $Z(G) \cap Z(R)$ is admissible,
    which means we would not insert $z$ into the output set
    $\mathcal{M}$ in step~2(a) of the algorithm.
    Therefore $z = Z(G)$ for some maximal admissible face $G \subseteq \scone$.
\end{proof}

In Algorithm~\ref{a-maxadm},
note that each zero set in step~1 can be built by examining any non-zero
representative of the corresponding ray $R$.
In step~2(a), a set might be constructed from many different
$(z,v)$ pairs, and so it is important when inserting $z \cap v$
into $\mathcal{S}_k$ to avoid duplicates.
Each $\mathcal{S}_k$ is only required for the following inductive step
(building $\mathcal{S}_{k+1}$), and can then be discarded.

Returning to Algorithm~\ref{a-primal},
in step~3 we can use zero sets to quickly identify which rays
are subfaces of $F$:
a ray $R \in \mathcal{V}$ is a subface of $F$ if and only if
$Z(R) \supseteq Z(F)$.
The Bruns-Ichim-Koch algorithm does enumerate $\hilb{F}$
as described, since each
extremal ray of $F$ is admissible, and so the rays in $\mathcal{V}$
that are subfaces of $F$ (which we give Bruns-Ichim-Koch as input)
are indeed the extremal rays of $F$.

%
%

\section{The dual algorithm} \label{s-dual}

We now outline the dual algorithm of
Bruns and Ichim \cite{bruns10-normaliz} for general Hilbert basis
enumeration, which is based on earlier work of
Pottier \cite{pottier96-euclidean}, and then show how this can be seamlessly
modified to yield a dual algorithm for enumerating fundamental normal
surfaces.

The Bruns-Ichim-Pottier dual algorithm has several variants,
and we describe the variant most relevant to us.
Consider the cone
$\mathscr{P} = \{ \mathbf{x} \in \R^d\,|\,A\mathbf{x}=0,\ \mathbf{x} \geq 0\}$,
where $A$ is some rational $m \times d$ matrix.
The dual algorithm takes as input the matrix $A$, and computes the
Hilbert basis $\hilb{\mathscr{P}}$.

It constructs a series of cones
$\mathscr{P}^{(0)},\ldots,\mathscr{P}^{(m)}$,
beginning with the non-negative orthant $\mathscr{P}^{(0)}=\R^d_{\geq 0}$,
and finishing with the target cone $\mathscr{P}^{(m)}=\mathscr{P}$.
In general, $\mathscr{P}^{(k)}$ is defined as for $\mathscr{P}$ above
but using only the first $k$ rows of the matrix $A$.
At each stage, the dual algorithm uses the previous basis
$\hilb{\mathscr{P}^{(k-1)}}$ to inductively compute the next basis
$\hilb{\mathscr{P}^{(k)}}$.
In this sense, the dual algorithm generalises the
double description method for enumerating extremal rays
\cite{fukuda96-doubledesc,motzkin53-dd}.

The key inductive step of the dual algorithm is the following
\cite{bruns10-normaliz}.

\begin{algorithm}[Bruns-Ichim-Pottier inductive step] \label{a-inductive}
    Let $\mathscr{P} \subset \R^d$ be a pointed rational cone
    with vertex at the origin,
    let $\mathbf{h} \in \R^d$ be a rational vector, and define the cones
    $\mathscr{P}_+ =
    \{\mathbf{x} \in \mathscr{P}\,|\,\mathbf{h}\cdot\mathbf{x} \geq 0\}$,
    $\mathscr{P}_- =
    \{\mathbf{x} \in \mathscr{P}\,|\,\mathbf{h}\cdot\mathbf{x} \leq 0\}$,
    and
    $\mathscr{P}_0 =
    \{\mathbf{x} \in \mathscr{P}\,|\,\mathbf{h}\cdot\mathbf{x}=0\}$.
    Then, given the input basis $B = \hilb{\mathscr{P}}$,
    the following algorithm computes the Hilbert bases
    $\hilb{\mathscr{P}_+}$, $\hilb{\mathscr{P}_-}$
    and $\hilb{\mathscr{P}_0}$.
    \begin{enumerate}
        \item Initialise candidate bases
        $B_+ = \{\mathbf{x} \in B\,|\,\mathbf{h}\cdot\mathbf{x}\geq 0\}$
        and
        $B_- = \{\mathbf{x} \in B\,|\,\mathbf{h}\cdot\mathbf{x}\leq 0\}$.

        \item Expand these candidate bases:
        for all pairs $\mathbf{x} \in B_+$ and $\mathbf{y} \in B_-$
        with $\mathbf{h}\cdot\mathbf{x} > 0 > \mathbf{h}\cdot\mathbf{y}$,
        insert the sum $\mathbf{x}+\mathbf{y}$ into $B_+$ and/or $B_-$
        according to whether
        $\mathbf{h}\cdot(\mathbf{x} + \mathbf{y})\geq 0$ and/or
        $\mathbf{h}\cdot(\mathbf{x} + \mathbf{y})\leq 0$.

        \item Reduce the candidate bases:
        remove all $\mathbf{b} \in B_+$ for which there exists some
        different $\mathbf{b}' \in B_+$ with
        $\mathbf{b}-\mathbf{b}' \in \mathscr{P}_+$.
        Reduce $B_-$ in a similar fashion.

        \item If $B_+$ and $B_-$ are both unchanged after the most
        recent round of expansion and reduction (steps 2 and 3),
        terminate with
        $\hilb{\mathscr{P}_+}=B_+$,
        $\hilb{\mathscr{P}_-}=B_-$, and
        $\hilb{\mathscr{P}_0}=B_+ \cap B_-$.
        Otherwise return to step~2 for a new round of expansion and
        reduction.
    \end{enumerate}
\end{algorithm}

In essence, step~2 expands the set of integer points that are generated under
addition by each individual set $B_+$ and $B_-$, and step~3 removes
redundant vectors from each generating set.
See \cite{bruns10-normaliz} for full proofs of
correctness and termination, both of which are non-trivial results.

There are many ways to optimise this algorithm.  In practice, one
can split $B_+$ and $B_-$ into three sets according to whether
$\mathbf{h}\cdot\mathbf{x}>0$,
$\mathbf{h}\cdot\mathbf{x}<0$ or
$\mathbf{h}\cdot\mathbf{x}=0$, to avoid duplication along
the common hyperplane $\mathbf{h}\cdot\mathbf{x}=0$.  The expansion and
reduction steps can be partially merged, so that we first
test each new sum $\mathbf{x}+\mathbf{y}$ for reduction before inserting
it into $B_+$ and/or $B_-$.
Several other optimisations are possible:
see \cite[Remark~16]{bruns10-normaliz}, and the
``darwinistic reduction'' in \cite[Section~3]{bruns10-normaliz}.

As with the primal method, running the dual algorithm of
Bruns, Ichim and Pottier
directly over the normal surface solution cone $\scone \subset \R^{7n}$
can be extremely slow.  Like the double description method for
vertex enumeration, one significant
problem is \emph{combinatorial explosion}:
even if the final Hilbert basis $\hilb{\mathscr{P}}$ is small,
the intermediate bases $\hilb{\mathscr{P}^{(k)}}$ can be extremely
large.  The high dimension of the normal surface solution cone $\scone$
exacerbates this problem.

Our solution, as with the primal algorithm, is to restrict our attention
to just the admissible region of $\scone$.  This time we do not decompose the
admissible region into maximal admissible faces; instead we can embed the
quadrilateral constraints seamlessly into the algorithm itself.  In this way,
a single run through the dual algorithm can compute all admissible
Hilbert basis elements for $\scone$.

The idea is simple:
when expanding
candidate bases, ignore sums $\mathbf{x}+\mathbf{y}$ that do not
satisfy the quadrilateral constraints.
This mirrors Letscher's filter for
the double description method \cite{burton10-dd}.

\begin{algorithm}[Dual algorithm for fundamental normal surfaces] \label{a-dual}
    To enumerate all fundamental normal surfaces in a triangulation
    with the $3f \times 7n$ matching matrix $A$:
    \begin{enumerate}
        \item Reorder the rows of $A$ using a good heuristic (as discussed
        further below).
        \item Initialise the candidate basis $B^{(0)}$ with all unit vectors
        in $\R^{7n}$.
        \item Inductively construct candidate bases
        $B^{(1)},\ldots,B^{(3f)}$ as follows.  For each $i=1,\ldots,3f$,
        run a modified Algorithm~\ref{a-inductive} with input basis $B^{(i-1)}$,
        using the vector $\mathbf{h} \in \R^{7n}$ defined by the
        $i$th row of $A$.  The specific modifications to
        Algorithm~\ref{a-inductive} are:
        \begin{itemize}
            \item
            In step~2, only consider pairs
            $\mathbf{x},\mathbf{y}$ for which
            $\mathbf{x}+\mathbf{y}$ satisfies the quadrilateral constraints.
            \item
            In step~3, instead of testing for
            $\mathbf{b}-\mathbf{b}' \in \mathscr{P}_+$,
            test for
            $\mathbf{b}-\mathbf{b}' \geq 0$ and
            $\mathbf{h}\cdot(\mathbf{b}-\mathbf{b}') \geq 0$.
            Instead of testing for
            $\mathbf{b}-\mathbf{b}' \in \mathscr{P}_-$,
            test for
            $\mathbf{b}-\mathbf{b}' \geq 0$ and
            $\mathbf{h}\cdot(\mathbf{b}-\mathbf{b}') \leq 0$.
        \end{itemize}
        When this modified Algorithm~\ref{a-inductive} terminates with output
        $B_+$ and $B_-$, set $B^{(i)}=B_+ \cap B_-$.
        \item The final set $B^{(3f)}$ is then the set of all vector
        representations of fundamental normal surfaces.
    \end{enumerate}
\end{algorithm}

Although we use the Bruns-Ichim-Pottier inductive process
in step~3 of our algorithm, we never identify any explicit cone
$\mathscr{P} \subset R^{7n}$ for which the input set $B^{(i-1)}$
is the Hilbert basis (this is because we only ever track admissible
basis elements).  This is why we modify step~3 of
Algorithm~\ref{a-inductive}: we must remove any reference to
the unknown cone $\mathscr{P}$.

The key invariant in this dual algorithm is that each candidate basis
$B^{(i)}$ contains precisely those Hilbert basis elements
for the cone defined by the first $i$ rows of the matching matrix $A$
that satisfy the quadrilateral constraints.
See the appendix for full proofs of correctness and termination.

The order in which we process the rows of $A$ can have a significant
effect on performance, by affecting the
severity of the combinatorial explosion
\cite{bruns10-normaliz,fukuda96-doubledesc}.
In step~1 of Algorithm~\ref{a-dual}, we reorder the rows using
\emph{position vectors} \cite{burton10-dd}, which essentially favours rows that
begin with long strings of zeroes.  Position vectors help optimise our
quadrilateral constraint filter, and have been found to
perform well as a sorting heuristic
for the related double description method;
see \cite{burton10-dd} for details.

We can prove correctness and termination for Algorithm~\ref{a-dual} as follows.

\begin{proof}[Proof of correctness and termination]
    Let $A^{(k)}$ denote the $k \times 7n$ submatrix of $A$ obtained by
    taking the first $k$ rows (after the reordering in step~1 of the
    algorithm), and let
    $\mathscr{P}^{(k)} =
    \{\mathbf{x} \in \R^{7n}\,|\,A^{(k)}\mathbf{x}=0,\ \mathbf{x}\geq 0\}$.
    We prove by induction that each candidate basis $B^{(k)}$ constructed in
    Algorithm~\ref{a-dual} consists of all elements of
    $\hilb{\mathscr{P}^{(k)}}$ that satisfy the quadrilateral constraints.

    The initial case is simple: $\mathscr{P}^{(0)}$ is just the non-negative
    orthant $\R^{7n}_{\geq 0}$, whose Hilbert basis consists of the unit
    vectors in $\R^{7n}$, all of which satisfy the quadrilateral
    constraints.  This matches the
    initialisation of $B^{(0)}$ in step~2 of the algorithm.

    Assume now that $B^{(i-1)}$ contains all elements of
    $\hilb{\mathscr{P}^{(i-1)}}$ that satisfy the quadrilateral
    constraints, and consider the construction of
    $B^{(i)}$ in step~3 of Algorithm~\ref{a-dual}.  This involves running
    a modified Algorithm~\ref{a-inductive}
    (the Bruns-Ichim-Pottier inductive step), with input basis
    $B^{(i-1)}$ and with the vector $\mathbf{h} \in \R^{7n}$ defined by
    the $i$th row of the (sorted) matrix $A$.

    Let
    $\mathscr{P}_+ =
    \{\mathbf{x}\in\mathscr{P}^{(i-1)}\,|\,\mathbf{h}\cdot\mathbf{x}\geq 0\}$,
    $\mathscr{P}_- =
    \{\mathbf{x}\in\mathscr{P}^{(i-1)}\,|\,\mathbf{h}\cdot\mathbf{x}\leq 0\}$,
    and
    $\mathscr{P}_0 = \mathscr{P}_+ \cap \mathscr{P}_- =
    \{\mathbf{x}\in\mathscr{P}^{(i-1)}\,|\,\allowbreak
    \mathbf{h}\cdot\mathbf{x}=0\}$.
    We claim that the modified Algorithm~\ref{a-inductive}
    terminates with output sets $B_+$ and $B_-$ containing those
    elements of $\hilb{\mathscr{P}_+}$ and $\hilb{\mathscr{P}_-}$
    respectively that satisfy the quadrilateral constraints.
    If this is true, then setting $B^{(i)} = B_+ \cap B_-$ will fill
    $B^{(i)}$ with those elements
    of $\hilb{\mathscr{P}_0} = \hilb{\mathscr{P}^{(i)}}$ that satisfy
    the quadrilateral constraints, as required.
    This will complete our induction, and
    the final output $B^{(3f)}$ will then consist of all admissible elements of
    $\hilb{\mathscr{P}^{(3f)}} = \hilb{\scone}$; that is,
    all vector representations of fundamental normal surfaces.

    It remains to prove our claim above, i.e., that
    the modified Algorithm~\ref{a-inductive}
    terminates with output sets $B_+$ and $B_-$ containing those
    elements of $\hilb{\mathscr{P}_+}$ and $\hilb{\mathscr{P}_-}$
    respectively that satisfy the quadrilateral constraints.

    Consider running the modified
    Algorithm~\ref{a-inductive} and the original
    Algorithm~\ref{a-inductive} side-by-side, where in the original
    algorithm we use the full basis $\hilb{\mathscr{P}^{(i-1)}}$ as input.
    Let $B_+^m$ and $B_-^m$ denote the working sets $B_+$ and $B_-$ in our
    modified
    algorithm, and let $B_+^o$ and $B_-^o$ denote the working sets
    $B_+$ and $B_-$ in
    the original algorithm.  From Bruns and Ichim
    \cite{bruns10-normaliz}, we know that the original algorithm terminates
    with $B_+^o = \hilb{\mathscr{P}_+}$ and
    $B_-^o = \hilb{\mathscr{P}_-}$.

    We claim that, for as long as both algorithms are running
    side-by-side, the working
    sets $B_+^m$ and $B_-^m$ contain precisely those elements of
    $B_+^o$ and $B_-^o$ respectively that satisfy the quadrilateral
    constraints.  We show this again using induction:
    \begin{itemize}
        \item It is true when the algorithms begin, since we assume from
        our ``outer induction'' on $B^{(i)}$ that the input basis
        $B^{(i-1)}$ for the modified algorithm contains precisely those
        elements that satisfy the quadrilateral constraints
        from the input basis $\hilb{\mathscr{P}^{(i-1)}}$
        for the original algorithm.

        \item We now show that, if at some point $B_+^m$ and $B_-^m$ contain the
        elements of $B_+^o$ and $B_-^o$ that satisfy the quadrilateral
        constraints, then this remains true after a single
        round of expansion (step~2 of Algorithm~\ref{a-inductive}).

        It is clear from our modification in step~3 of Algorithm~\ref{a-dual}
        that we do not insert any new sums
        $\mathbf{x}+\mathbf{y}$ into $B_+^m$ or $B_-^m$ that
        \emph{break} the quadrilateral constraints.
        Moreover, in the original algorithm,
        any sum $\mathbf{x}+\mathbf{y}$ that \emph{does} satisfy the
        quadrilateral constraints must have
        summands $\mathbf{x} \in B_+^o$ and $\mathbf{y} \in B_-^o$
        that satisfy them also
        (since $\mathbf{x},\mathbf{y} \geq 0$); therefore
        $\mathbf{x} \in B_+^m$ and $\mathbf{y} \in B_-^m$,
        and the sum $\mathbf{x}+\mathbf{y}$
        is also considered in the modified algorithm.

        \item Likewise, we can show that if at some point $B_+^m$ and $B_-^m$
        contain the elements of $B_+^o$ and $B_-^o$ that satisfy the
        quadrilateral constraints, then this remains true
        after a single round of reduction
        (step~3 of Algorithm~\ref{a-inductive}).

        First, we note that our modification to the reduction step is
        sound: for any $\mathbf{b},\mathbf{b}' \in B_+^m$, we have
        $\mathbf{b}-\mathbf{b}' \in \mathscr{P}_+$ if and only if
        $\mathbf{b}-\mathbf{b}' \geq 0$ and
        $\mathbf{h}\cdot(\mathbf{b}-\mathbf{b}') \geq 0$,
        since we already have
        $A^{(i-1)}\mathbf{b}=A^{(i-1)}\mathbf{b}'=0$
        from our outer induction.
        Next, we observe that a vector $\mathbf{b} \in B_+^m$
        is removed in step~3 of the modified Algorithm~\ref{a-inductive}
        if and only if it is removed
        in the original algorithm: this is because
        $\mathbf{b}$ satisfies the quadrilateral constraints, and so any
        $\mathbf{b}'$ for which
        $\mathbf{b}-\mathbf{b}' \geq 0$ must satisfy them also,
        and therefore $\mathbf{b}' \in B_+^m$ if and only if
        $\mathbf{b}' \in B_+^o$.
        Similar arguments apply to $\mathscr{P}_-$ and $B_-^m$.
    \end{itemize}

    By induction, $B_+^m$ and $B_-^m$ contain precisely those elements of
    $B_+^o$ and $B_-^o$ that satisfy the quadrilateral constraints
    for as long as both algorithms are running.

    It follows that the modified algorithm terminates no later
    than the original algorithm, since if $B_+^o$ and $B_-^o$ are unchanged
    under some round of expansion and reduction, then their elements
    $B_+^m$ and $B_-^m$ that satisfy the quadrilateral constraints
    are unchanged also.
    The modified algorithm might terminate \emph{earlier}, but this will
    not change the final output: if $B_+^m$ and $B_-^m$ are
    unchanged under some earlier round of expansion and reduction, then
    running any additional rounds will leave them unchanged.
    
    Either way, we see that
    the final output sets $B_+^m$ and $B_-^m$ from the modified
    Algorithm~\ref{a-inductive}
    contain precisely those elements of the final output sets
    $B_+^o = \hilb{\mathscr{P}_+}$ and $B_-^o = \hilb{\mathscr{P}_-}$
    that satisfy the quadrilateral
    constraints, thus establishing our original claim.
\end{proof}

%
%

\section{Experimentation} \label{s-perf}

For our experiments, we implement both the primal and dual algorithms
directly in the software package
{\regina} \cite{burton04-regina,regina}.
The primal algorithm uses the tree traversal method \cite{burton13-tree}
for step~1 (enumerating vertex surfaces), and calls
{\normaliz}~2.10.1 \cite{bruns10-normaliz,bruns01-closure}
for step~3 (the Bruns-Ichim-Koch algorithm).
The dual algorithm incorporates optimisations
described by Bruns and Ichim \cite{bruns10-normaliz}.
All implementations use arbitrary-precision integer arithmetic.

Our experimental data set is the closed hyperbolic census of Hodgson and
Weeks \cite{hodgson94-closedhypcensus}, which consists of 10,986
3-manifold triangulations with sizes from $n=9$ to $n=35$ tetrahedra.
This census was chosen because (i)~it
contains ``real-world'' triangulations with properties and structures
that one might encounter in a typical topological algorithm; and
(ii)~the triangulations are large enough to make the problems difficult
and the results mathematically interesting.

The triangulations were sorted by increasing $n$, and then each
algorithm was run over the census on a cluster, processing triangulations in
parallel until a fixed walltime limit was reached.
For the primal algorithm, the limit was 4~days walltime with 15 slave
processes (each of which could process one triangulation at a time).
For the dual algorithm it became clear early on that the results would
be severely limited (as discussed below),
and so it was rerun with 4~days walltime and
31 slave processes to increase the number of triangulations for which
the two algorithms could be compared.
All processes ran on 2.93GHz Intel Xeon X5570 CPUs.

\begin{figure}[tb]
    \centering
    \includegraphics[scale=0.5]{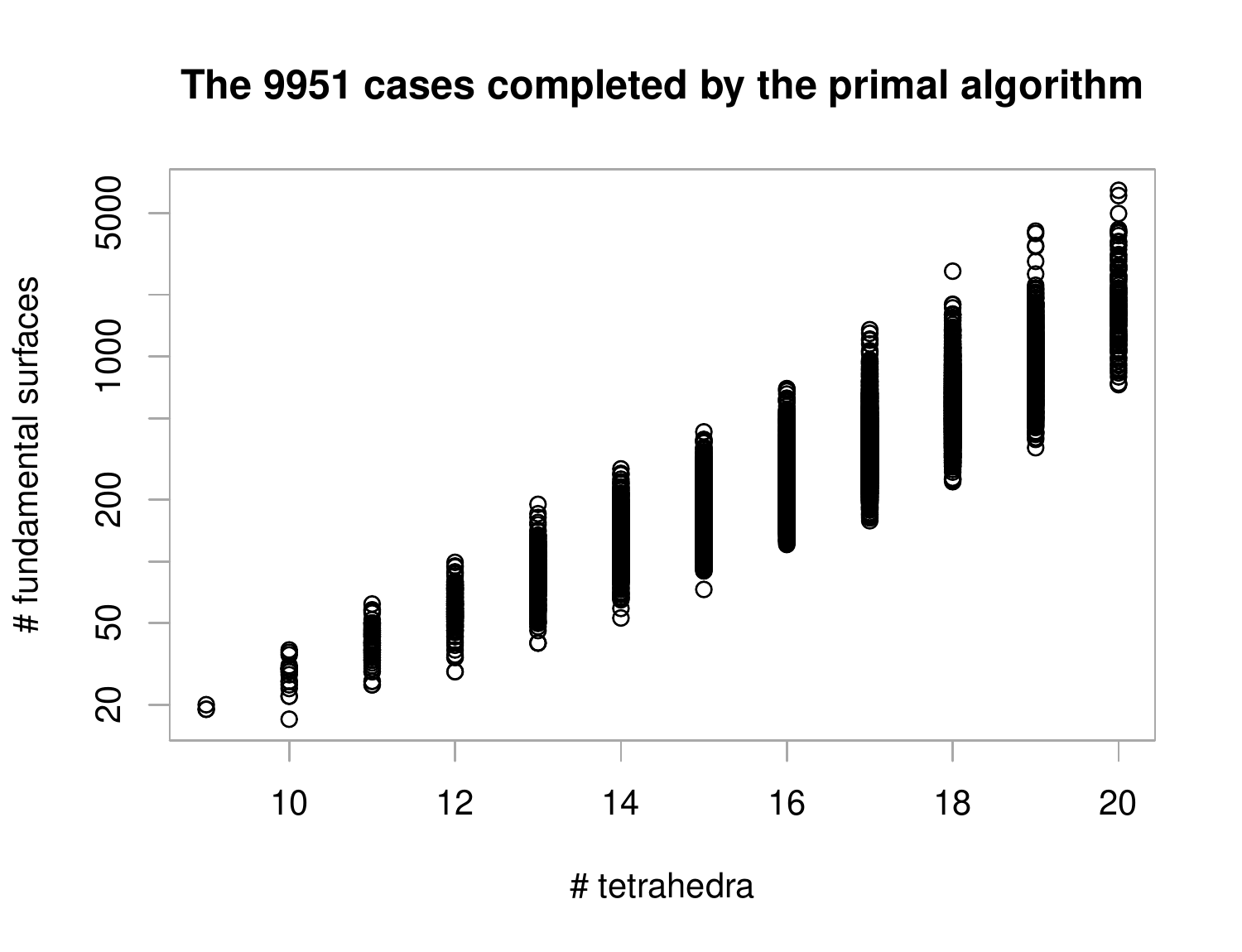}%
    \includegraphics[scale=0.5]{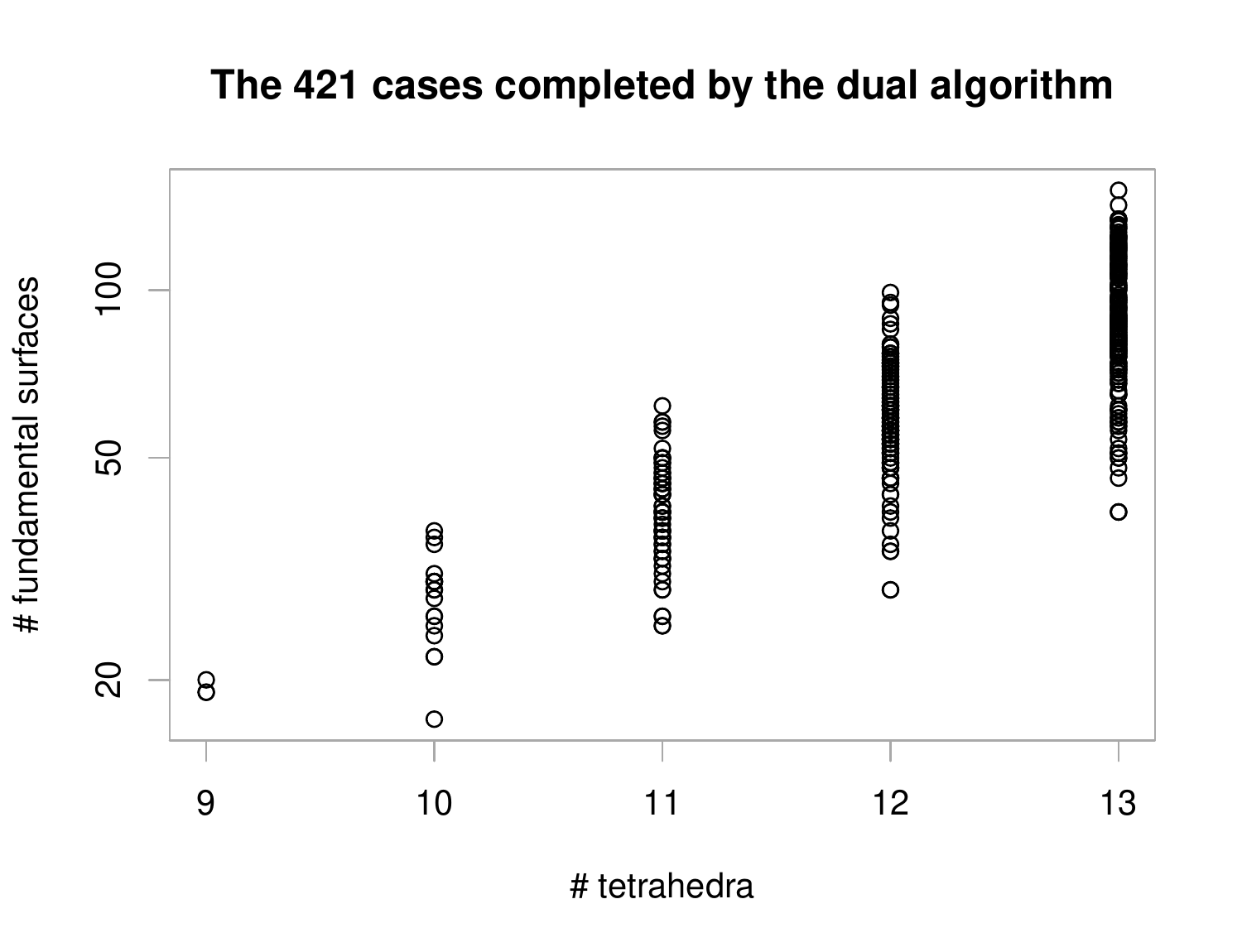}
    \caption{Input and output sizes for test cases that were completed}
    \label{fig-cases}
\end{figure}

The primal algorithm completed 9951 of the 10986 test cases, and the
dual algorithm completed just 421.  Figure~\ref{fig-cases}
summarises the input and output sizes for these cases, with
output size on a log scale.
Figure~\ref{fig-comparison} compares running times for the
421 cases that were completed by
both algorithms (there were no cases that the dual algorithm completed
but the primal algorithm did not).
Each point on the plot represents an individual
triangulation, and here both axes use log scales.

\begin{figure}[tb]
    \centering
    \includegraphics[scale=0.55]{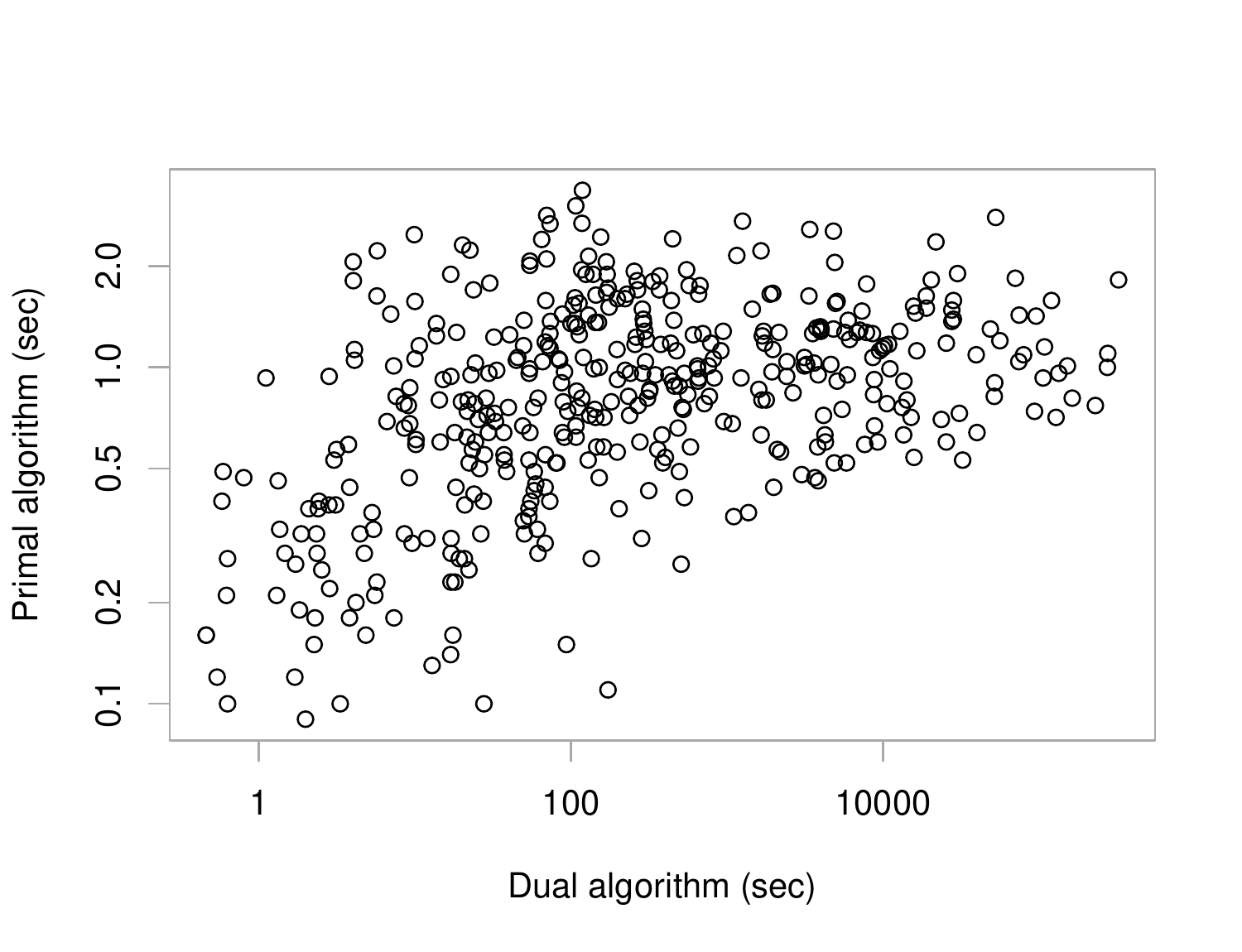}
    \caption{Comparison of running times for the cases solved by
    both algorithms}
    \label{fig-comparison}
\end{figure}

It is clear from the figure that the primal algorithm runs significantly
faster.  This contrasts with the enumeration of \emph{vertex} normal surfaces,
where the corresponding double description is fast for problems of this
size (but becomes inferior when the problems become larger)
\cite{burton13-tree}.
The primal method is also more consistent in its performance:
for $n=13$ (the largest amongst these 421 cases),
the primal algorithm ranges from
$0.44$ to $3.36$ seconds (spanning a single order of magnitude),
whereas the dual algorithm ranges from
$4.02$ to $322\,544$ seconds (spanning five orders of magnitude).


We now drill more deeply into the primal algorithm, to find where its
bottlenecks lie.  For this we use all 9951 cases that it completed,
so that we can study how the algorithm behaves for larger and more
difficult problems.
Figure~\ref{fig-perf-primal} shows how the running time is divided
amongst the three main steps of (i)~enumerating vertex surfaces;
(ii)~constructing maximal admissible faces; and (iii)~running the
Bruns-Ichim-Koch algorithm over each.  For each plot, the horizontal
axis measures total running time (on a log scale), as an indicator of
how difficult overall each test case was found to be.

\begin{figure}[tb]
    \centering
    \includegraphics[scale=0.5]{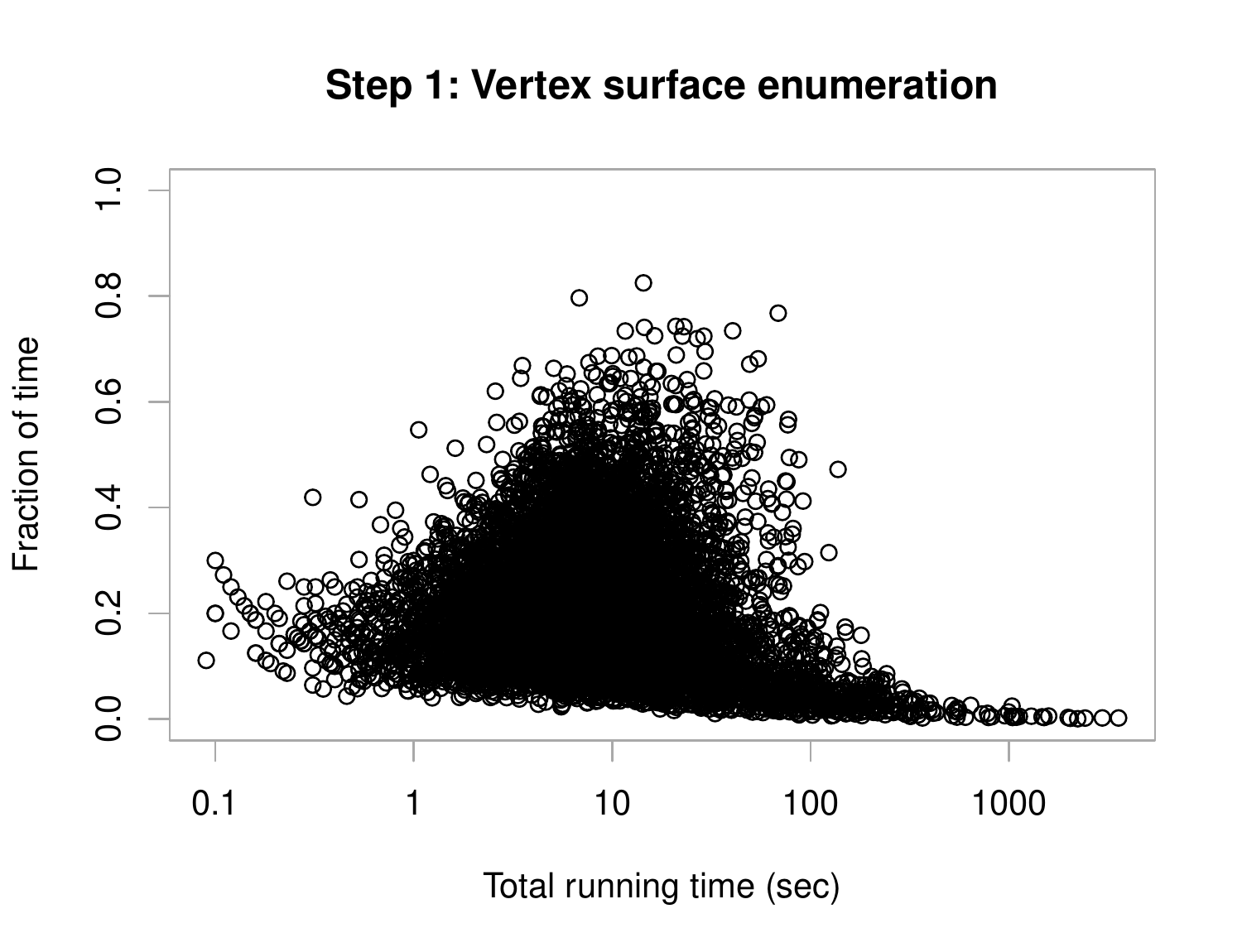}%
    \includegraphics[scale=0.5]{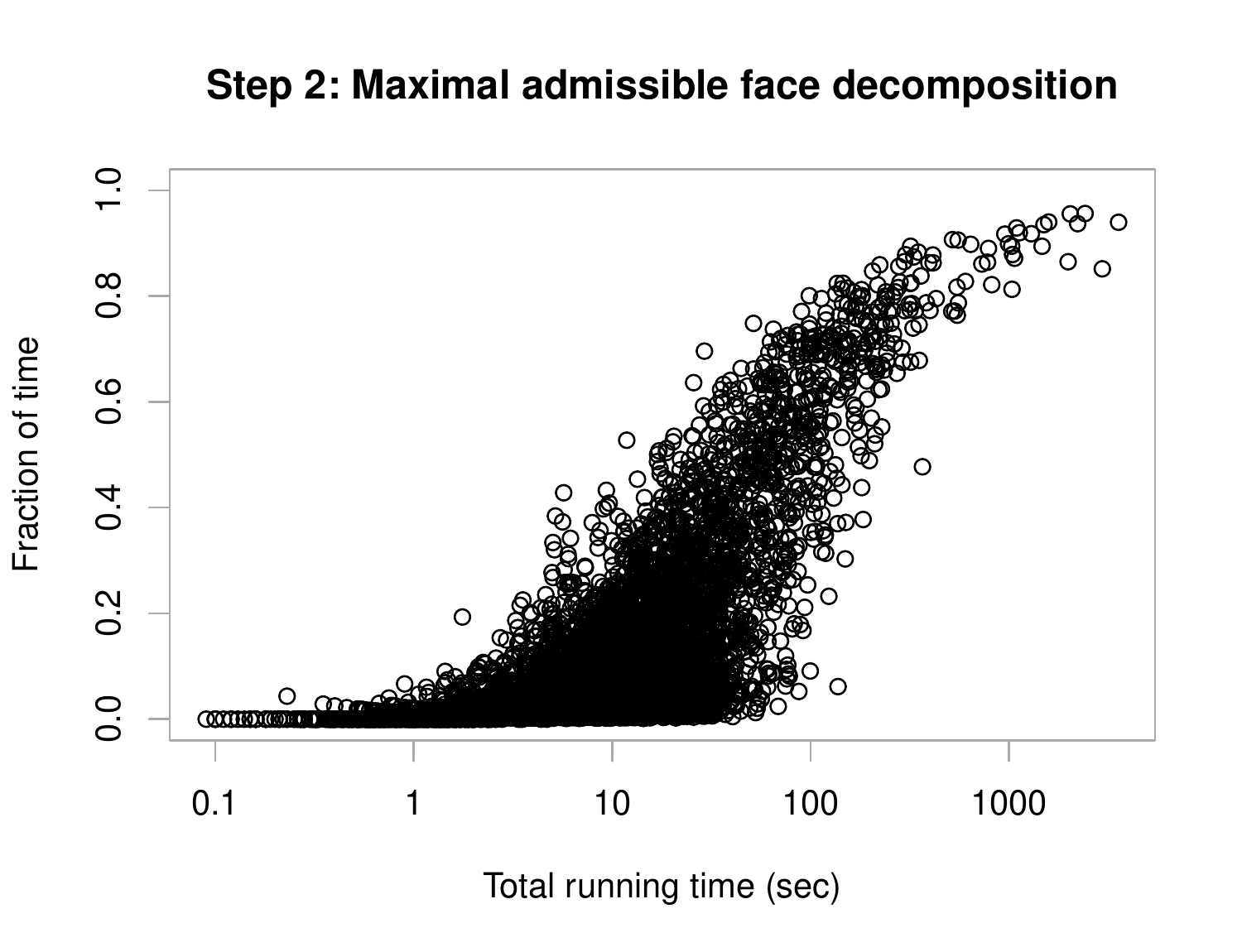} \\
    \includegraphics[scale=0.5]{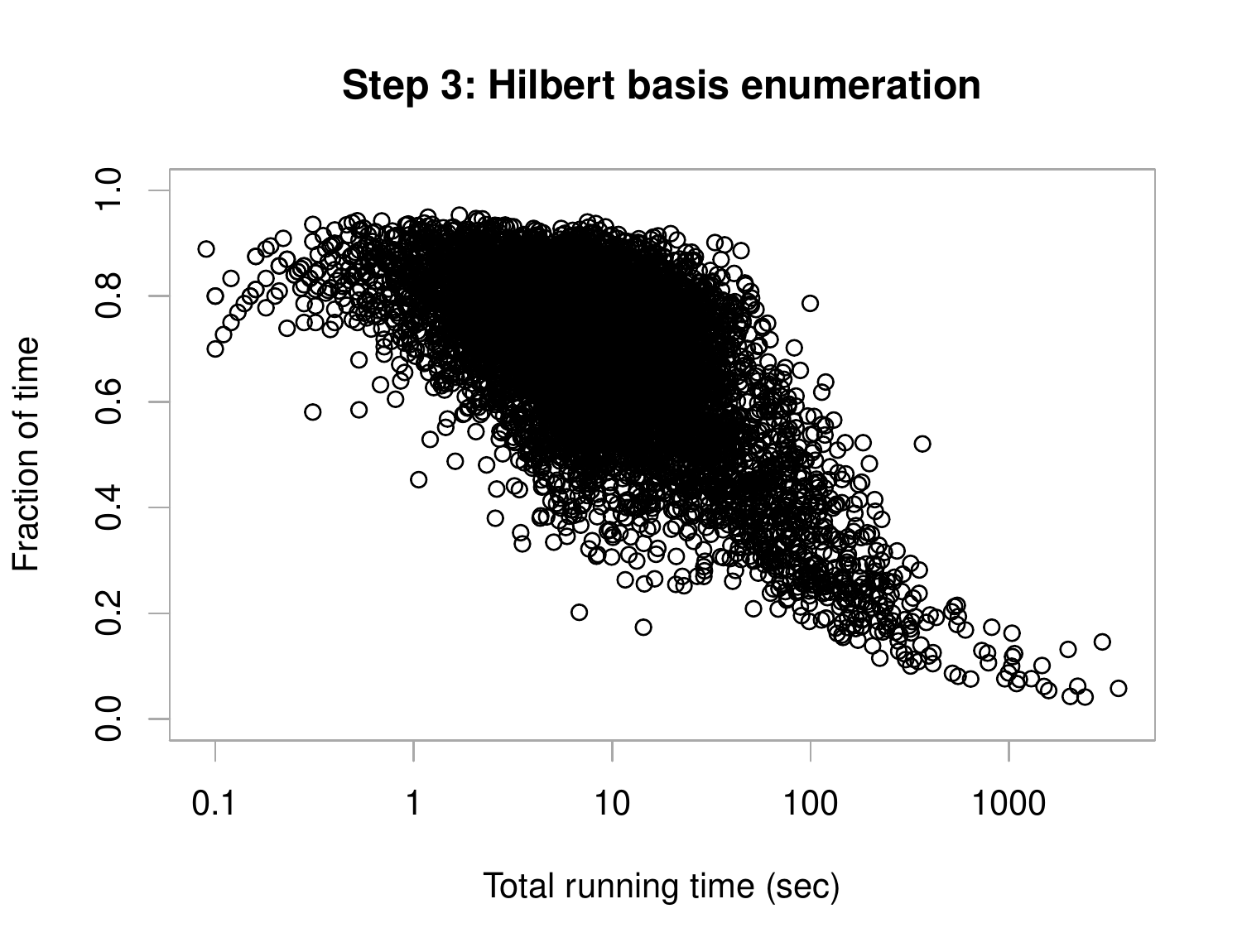}
    \caption{Division of time for the primal algorithm}
    \label{fig-perf-primal}
\end{figure}

We see that, as the running time grows, it is the
enumeration of maximal admissible faces that consumes the bulk of the
running time.  Significant gains could therefore be made by
incorporating more sophisticated algorithms and data structures into
this step.  One promising
approach could be to adapt the Kaibel-Pfetsch algorithm for
general polytopes \cite{kaibel02-lattice} to the setting of normal
surface theory.

%
%

\section{Application to crosscap numbers} \label{s-crosscap}

To illustrate the practicality of the primal algorithm, we use it to
compute previously-unknown crosscap numbers of knots.
Related to the genus of a knot, the
\emph{crosscap number} $\cc{K}$ of a knot $K$
is an invariant describing the smallest-genus
non-orientable surface that the knot bounds.

Crosscap numbers are difficult to compute: of the 2977
non-trivial prime knots with $\leq 12$ crossings,
2640 crosscap numbers are still unknown
(in contrast, the genus is known for all of these knots) \cite{www-knotinfo}.
Although there are specialised methods for certain classes of knots
\cite{adams12-spanning,hirasawa06-2bridge,ichihara10-pretzel,
teragaito04-crosscap},
there is no algorithm known for computing crosscap numbers in general.
However, we can come close: by enumerating fundamental normal
surfaces, it is possible to either compute the crosscap number of a knot
precisely or else reduce it to one of two possible values
\cite{burton12-crosscap}.

Due to space limitations we only give a very brief summary of the
computational results here.
See \cite{burton12-crosscap}
for more information on crosscap numbers and the underlying algorithm.

The crosscap number algorithm \cite[Algorithm~4.4]{burton12-crosscap} works
with triangulated \emph{knot complements}: these are triangulated 3-manifolds
with boundary, obtained by removing a small regular neighbourhood of a
knot from the 3-sphere.  The algorithm requires the triangulation $\tri$
to have specific properties (an ``efficient suitable triangulation''),
which can be tested by enumerating vertex normal surfaces.
It then enumerates all \emph{fundamental} normal surfaces in
$\tri$, runs some simple tests over each (such as whether the knot
bounds the surface, and computing the orientable or non-orientable genus),
and then either determines $\cc{K}$ precisely or declares that
$\cc{K} \in \{t, t+1\}$ for some $t$.

We process our knot complements in order by increasing number of
tetrahedra, and stop at the point where the computations become
infeasibly slow; our longest computation ran for $1.61$ days.
In total, we were able to run the algorithm over
585 knots, whose number of tetrahedra $n$ ranged from 5 to 27
(mean and median $n \simeq 22.7$ and $24$).
The mean and median running times were $\simeq 258$ and $\simeq 52$\ minutes
respectively, which is extremely pleasing given that these enumeration problems
involve up to $7 \times 27 = 189$ dimensions.  When this crosscap
number algorithm was developed \cite{burton12-crosscap},
such computations were believed to be beyond practical possibility
for all but the very simplest of knots.

The final outcomes are also pleasing.  We are able to compute
398 of the unknown crosscap numbers precisely, which
goes a significant way towards filling in the 2640 missing crosscap
numbers from the {\knotinfo} database.
For the remaining 187 calculations that completed,
we obtain information that is already known.
All results, both new and known,
are consistent with exiting {\knotinfo} data.\footnote{In fact,
    the crosscap number that we compute for the knot
    $8_{15}$ differs from {\knotinfo}, but matches the source from which the
    {\knotinfo} data was drawn \cite{adams12-spanning}.  This is presumably a
    transcription error in the database.}
A detailed table of results can be found at
\url{http://www.maths.uq.edu.au/~bab/code/}.

It should be noted that
the paper \cite{burton12-crosscap} also computes new
crosscap numbers, but using integer programming methods instead.
Our techniques here require knots whose complements do not
have too many tetrahedra;
in contrast, the integer programming techniques of
\cite{burton12-crosscap} can work with very large knot complements, but
only where good lower bounds on $\cc{K}$ are already known.
In this sense, the two methods complement each other well.

%
%

\section*{Acknowledgements}

The author is grateful to the Australian Research Council for their
support under the Discovery Projects funding scheme
(projects DP1094516 and DP110101104).
Computational resources used in this work
were provided by the Queensland Cyber Infrastructure Foundation.

%
%

\small
\bibliographystyle{amsplain}
\bibliography{pure}

%
%

\bigskip
\noindent
Benjamin A.~Burton \\
School of Mathematics and Physics, The University of Queensland \\
Brisbane QLD 4072, Australia \\
(bab@maths.uq.edu.au)

\end{document}